\newtheorem{theorem}{Theorem}
\theoremstyle{plain}
\newtheorem{corollary}[theorem]{Corollary}
\newtheorem{definition}[theorem]{Definition}
\newtheorem{lemma}[theorem]{Lemma}
\newtheorem{remark}[theorem]{Remark}
\newcommand\bel[1]{\begin{equation}\label{#1}}
\newcommand\ee{\end{equation}}
\numberwithin{theorem}{section}
\numberwithin{equation}{section}
\begin{document}
\title[Time-homogeneous PAM]
{Stochastic Parabolic Anderson Model with Time-homogeneous Generalized Potential:
Mild Formulation of Solution}
\author{Hyun-Jung Kim}
\address{\textnormal{Department of Mathematics\\ 
University of Southern California\\
Los Angeles, CA 90089\\
kim701@usc.edu}}
\curraddr[Hyun-Jung Kim]{Department of Mathematics, University of Southern California\\
Los Angeles, CA 90089}
\urladdr{https://hyunjungkim.org}

\subjclass[2010]{Primary
35R60; Secondary 35K20, 35C15, 58C30}

\keywords{Schauder estimate, Fixed point theorem, Mild solution, H\"older regularity}

\begin{abstract}
A mild formulation for stochastic parabolic Anderson model with time-homogeneous
Gaussian potential suggests a way of defining a solution
to obtain its optimal regularity.
Two different interpretations in the equation or in the mild formulation are possible with
usual pathwise product and the Wick product: the usual pathwise
interpretation is mainly discussed. We emphasize that 
a modified version of parabolic Schauder estimates
is a key idea for the existence and uniqueness of a mild solution. 
In particular, the mild formulation 
is crucial to investigate a relation between the equation 
with usual pathwise product and the Wick product.
\end{abstract}
\maketitle

\section{Introduction}
\label{sec:Intro}

We start with the stochastic parabolic Anderson model (PAM)
\bel{eq000}
\frac{\partial u(t,x)}{\partial t} =
\Delta u(t,x) +
u(t,x) \dot{W}(x)
\ee
driven by multiplicative Gaussian white noise potential in $d$ space dimensions,
where 
$\Delta$ is the Laplacian operator, and 
$\dot{W}(x)$ is time-homogeneous 
Gaussian white noise with mean zero and covariance
$\mathbb{E}\left[\dot W(x)\dot W(y)\right]=\delta(x-y),$
where $\delta$ is the Dirac-delta function.
Note that the white noise is a generalized process, and we need to 
make sense of the multiplication
$u\dot{W}$ in \eqref{eq000}.

Since the notation $\dot{W}(x)$
stands for the formal derivative of a Brownian sheet $W(x)$,
the equation \eqref{eq000} may be written as 
\bel{eq00}
\frac{\partial u(t,x)}{\partial t} =
\Delta u(t,x) +
u(t,x) \frac{\partial^d}{\partial x_1\cdots \partial x_d}W(x).
\ee

Depending on the multiplication between $u$ and
$\displaystyle\frac{\partial^d}{\partial x_1\cdots \partial x_d}W$,
the equation \eqref{eq00} may be classified as follows: 
\begin{itemize}
\item Stochastic PAM with usual pathwise product (Stratonovich interpretation):
\bel{eq00-sp1}
\frac{\partial u(t,x)}{\partial t} =
\Delta u(t,x) +
u(t,x)\cdot \frac{\partial^d}{\partial x_1\cdots \partial x_d}W(x).
\ee
\item Stochastic PAM with Wick product $\diamond$ 
(Wick-It\^o-Skorokhod interpretation):
\bel{eq00-sp1-wick}
\frac{\partial u(t,x)}{\partial t} =
\Delta u(t,x) +
u(t,x)\diamond \frac{\partial^d}{\partial x_1\cdots \partial x_d}W(x).
\ee
\end{itemize}
As the idea of Stratonovich integral suggests,
the equation \eqref{eq00-sp1} 
is equivalently interpreted as a pathwise limit of approximated 
equations
\bel{eq00-sp-approx}
\frac{\partial u^{\varepsilon}(t,x)}{\partial t} =
\Delta u^{\varepsilon}(t,x) +
u^{\varepsilon}(t,x)\cdot 
\frac{\partial^d}{\partial x_1\cdots \partial x_d}W^{\varepsilon}(x),
\ee
where 
$W^{\varepsilon}$ are smooth approximations of each sample 
path of $W$ for $\varepsilon>0$.

If $d\geq 2$, the usual pathwise product of $u \cdot 
\displaystyle\frac{\partial^d}{\partial x_1\cdots \partial x_d}W$
is classically not well-defined, and 
constructing a solution of \eqref{eq00-sp1} 
presents a major challenge in the standard theory
of stochastic partial differential equations.
Indeed, it is known that $d$-dimensional Brownian sheet $W$
has regularity $1/2-\varepsilon$ in each parameter for any $\varepsilon>0$. Therefore,
$\displaystyle\frac{\partial^d}{\partial x_1\cdots \partial x_d}W$ can be
understood to have regularity $-d/2-\varepsilon$ in total.
Then, the solution $u$ is expected to have regularity $2-d/2-\varepsilon$
so that the sum of the regularity of $u$ and 
$\displaystyle\frac{\partial^d}{\partial x_1\cdots \partial x_d}W$ 
is strictly less than 0. Hence, unfortunately,
classical integration theory cannot be applied to
the product $u \cdot  
\displaystyle\frac{\partial^d}{\partial x_1\cdots \partial x_d}W$, and
advanced techniques are inevitably required.

When $d=2$ and on the whole space $\mathbb{R}^2$, 
the paper \cite{Hai15} introduces a particular renormalization procedure and
constructs a solution to \eqref{eq00-sp1} 
by subtracting a divergent constant from the equation.
On a torus of $\mathbb{R}^2$, a solution of \eqref{eq00-sp1} 
is constructed independently
using paracontrolled distributions in \cite{GIP12} and using the theory of regularity
structures in \cite{Hai14}. 
When $d=3$, 
using the theory of regularity structures, the paper \cite{Hai15-2}
carries out the construction of \eqref{eq00-sp1} on the whole space $\mathbb{R}^3$.
An alternative construction of solution to \eqref{eq00-sp1} on a torus
of $\mathbb{R}^3$ is also established in \cite{HP14}.

It turns out that for the model \eqref{eq00-sp1} when $d=1$,
there are several ways to define a solution 
in the Stratonovich sense, and the regularity of solution is worth attention.
In \cite{Hu15}, 
the Feynman-Kac solution for \eqref{eq00-sp1} in the Stratonovich sense 
is introduced. The paper proves that the Feynman-Kac solution 
is almost H\"older 3/4 continuous in time and almost H\"older 1/2
continuous in space. 
Here, ``almost" H\"older continuity of order $\gamma$
means H\"older continuity of any order less than $\gamma$. 
However, the standard parabolic theory implies that
the spatial regularity can be improved.
Indeed, consider the additive model
\begin{equation}
\label{additive}
u_t(t,x)
= u_{xx}(t,x) + \dot{W}(x)
\end{equation}
as a reference for optimal regularity.
It is known that the explicit solution of \eqref{additive} 
is almost H\"older 3/4 continuous in time
and almost H\"older 3/2 continuous in space. 
In that sense, very recently, the paper \cite{KL18}
shows that a solution defined using change of variables is almost 
H\"older 3/4 continuous in time and almost H\"older 3/2 continuous in space
as desired.

On the other hand, the Wick-It\^o-Skorokhod interpretation 
\eqref{eq00-sp1-wick}
draws attention from several authors. 
For example, the paper \cite{UH96} by Uemura when $d=1$,
and the paper \cite{Hu02} by Hu when $d<4$, define the chaos 
solution using multiple It\^o-Wiener integrals. Also, the paper \cite{UH96}
gives some regularity results: 
The chaos solution is almost H\"older 1/2 continuous in both time and space.
Recently, the paper \cite{KL17} pays attention to
the Wick-It\^o-Skorokhod interpretation of \eqref{eq00-sp1-wick} in one space dimension
and
the optimal space-time 
regularity of solution. Using the chaos expansion (or Fourier expansion),
the paper proves that 
the chaos solution is
almost H\"older 3/4 continuous in time and almost H\"older 3/2 continuous in space.

The objectives of this paper are:
\begin{itemize}
\item to introduce a new pathwise solution 
using the mild formulation (called a mild solution) of time-homogeneous 
parabolic Anderson model driven by Gaussian white noise
on an interval with Dirichlet boundary condition
\begin{equation}
\label{eq:main}
\begin{split}
\frac{\partial u(t,x)}{\partial t} &=
\frac{\partial^2 u(t,x)}{\partial x^2} +
u(t,x)\cdot \frac{\partial}{\partial x}W(x),\ t>0, \ 0<x<\pi,\\
u(t,0)&=u(t,\pi)=0, \ u(0,x)=u_0(x).
\end{split}
\end{equation}
\item to obtain the optimal space-time H\"older regularity of the mild solution. That is,
the mild solution is almost H\"older $3/4$ continuous in time and
almost H\"older $3/2$ continuous in space.
\end{itemize}
The main results are extended to any H\"older continuous function 
$W$ on $[0,\pi]$
of order $\gamma \in (0,1)$, which implies
the derivative of $W$ is a generalized function. 
Note that our results can be applied to \eqref{eq:main} on the whole line $\mathbb{R}$ 
as long as an appropriate norm of $W$ on $\mathbb{R}$ is bounded.
We also show the importance of mild formulation:
the mild formulation suggests a way of finding relations 
between Stratonovich interpretation and Wick-It\^o-Skorokhod interpretation.

Section 2 discusses 
the classical Schauder estimate and a modified version of parabolic Schauder estimates.
In Section 3, a mild solution of \eqref{eq:main} is defined and
the optimal space-time regularity of the mild solution is obtained. Section 4 gives conclusion and 
suggests further directions
of research.

\section{The Schauder Estimates}
\label{sec:SE}

In this section, we establish a modified version of the Schauder estimates for parabolic type
on an interval in some spaces.

We start with the H\"older spaces on $\mathbb{R}^d$. Denote
by $\displaystyle\frac{\partial}{\partial z_i}$ the differentiation operator with respect to $z_i$, and
for a multi-index $\alpha = (\alpha_1,\cdots,\alpha_d)$
with $\alpha_i \in \mathbb{N}_0$ and $|\alpha|=\sum_{i=1}^d \alpha_i<\infty$, denote
$$\partial_z^{\alpha} = \frac{\partial^{\alpha_1}}{\partial z_1^{\alpha_1}}\cdots \frac{\partial^{\alpha_d}}{\partial z_d^{\alpha^d}}.$$

Let $G$ be a domain in $\mathbb{R}^d$.
For $0<\gamma < 1$, let
\begin{eqnarray*}
[u]_{\gamma}:= \sup_{z \neq y \in G} 
\frac{\left|u(z)-u(y)\right|}{|z-y|^{\gamma}}.
\end{eqnarray*}
We say that $u$ is H\"older continuous with H\"older exponent $\gamma$
(or H\"older $\gamma$ continuous) on $G$ if
$$
\sup_{z \in G} |u(z)|
+ [u]_{\gamma}<\infty.$$
The collection of H\"older $\gamma$ continuous functions on $G$ is 
denoted by
$\mathcal{C}^{\gamma}\left(G\right)$
with the norm 
$$\| u \|_{\gamma}:=
\|u\|_{L_{\infty}(G)}
+ [u]_{\gamma},$$
where $$\|u\|_{L_{\infty}(G)}:= \sup_{z\in G}|u(z)|.$$

We say that $u$ is a $k$ times continuously differentiable function on $G$
if $\partial^{\alpha}_z u$ exists and is continuous for all $|\alpha|\leq k$.
The collection of $k$ times continuously differentiable functions on $G$ such that
$$\partial^{\alpha}_z u \in \mathcal{C}^{\gamma}(G),\ |\alpha|=k$$
is denoted by $\mathcal{C}^{k+\gamma}(G)$
with the norm
$$\| u \|_{k+\gamma}:= 
\sum_{|\alpha|\leq k} \sup_{z\in G}|\partial^{\alpha}_z u(z)|
+\sum_{|\alpha|=k} [\partial^{\alpha}_z u]_{\gamma}<\infty.$$

In particular, due to the presence of time and space variables, we often write
$$\mathcal{C}^{k_1+\gamma_1,k_2+\gamma_2}_{t,x}\left((0,T)\times G\right)$$
with the norm 
$$\| u \|_{k_1+\gamma_1,k_2+\gamma_2}:= 
\sum_{n\leq k_1|\alpha|\leq k_2} \sup_{t\in (0,T), x\in G}
\left|\partial^n_t\partial^{\alpha}_x u(t,x)\right|
+\left[\partial^{k_1}_t u\right]_{\gamma_1}
+\sum_{|\alpha|=k_2} \left[\partial^{\alpha}_x u\right]_{\gamma_2}
<\infty.$$

We restrict the space domain by $G=(0,\pi)$
and let us denote the Dirichlet heat kernel on $(0,\pi)$ by
$$
P^D(t,x,y)=\sum_{k=1}^{\infty}
e^{-k^2t}\ m_k(x)m_k(y),\ m_k(x)=\sqrt{\frac{2}{\pi}}\sin(kx),\ k\geq 1.
$$
Define a convolution $\star$ for a function $f$ by
$$ \left(P^D \star f\right)(t,x) = \int_0^t\int_0^{\pi} P^D(t-s,x,y)f(s,y)dyds.$$
Let $0<\gamma \notin \mathbb{N}$ and $T>0$ be given.  
The classical parabolic type of Schauder's estimate in H\"older spaces
(Theorem 5.2 of Chapter IV in \cite{Lady67})
says that
the convolution mapping 
$$f \mapsto P^D \star f$$
is continuous from 
$\mathcal{C}_{t,x}^
{\gamma/2,\gamma}\left((0,T)\times(0,\pi)\right)$
to 
$\mathcal{C}_{t,x}^
{1+\gamma/2,2+\gamma}\left((0,T)\times(0,\pi)\right)$.
More specifically, 
there exists a Schauder constant $C_T>0$ 
such that $C_T$ remains bounded as $T\rightarrow 0$ and 
$$\left\|P^D \star f\right\|_{1+\gamma/2,2+\gamma} 
\leq C_T \|f\|_{\gamma/2,\gamma}.$$
Unfortunately, the classical Schauder constant $C_T$ does not give a good estimate for our purpose.
We now give a relaxed version of Schauder's estimate in fractional Sobolev spaces instead of in
H\"older spaces. The modified results will be
useful for the existence of mild solution in the next section.

Denote by $H^s_p$ the fractional Sobolev space 
as the collection of 
all functions $f$ such that
$$\left\|\left(-\Delta\right)^{s/2}f \right\|_{L_p(0,\pi)}<\infty,$$
where $$\left(-\Delta\right)^{s/2}f =\frac{1}{\Gamma(-s/2)}\int_0^{\infty}
(e^{t\Delta}-I)f\frac{dt}{t^{1+\frac{s}{2}}},$$ 
$\Gamma$ is the gamma function, and
$e^{t\Delta}$ is the semigroup of Laplacian operator on $(0,\pi)$ with zero boundary conditions.

Define $$\Lambda:=\left(-\Delta\right)^{1/2}.$$ 
\begin{lemma}
\label{Lem-Kry}
Let $h\in L_p(0,\pi)$, $p\geq 1$ and let $0<\theta\leq 2$. 
For any $0<t\leq T$, we have
$$\left\|\Lambda^{\theta}\int_0^{\pi} P^D(t,\cdot,y)h(y)dy\right\|_{L_{p}(0,\pi)}
\leq C(T,\theta,p)t^{-\theta/2}\|h\|_{L_{p}(0,\pi)}$$
for some $C(T,\theta,p)>0$ depending on $T$, $\theta$ and $p$.
\end{lemma}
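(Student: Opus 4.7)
The integral on the left is the Dirichlet heat semigroup applied to $h$: with $-\Delta m_k = k^2 m_k$ and the convention $\hat h_k := \int_0^\pi h(y) m_k(y)\,dy$,
\begin{equation*}
\Lambda^{\theta} \int_0^{\pi} P^D(t,x,y) h(y)\, dy = \sum_{k=1}^{\infty} k^{\theta} e^{-k^2 t}\, \hat h_k\, m_k(x),
\end{equation*}
since $\Lambda^{\theta}$ acts as the spectral multiplier $k\mapsto k^{\theta}$ on the eigenbasis. My plan is to establish the estimate first for $p=2$ by Parseval, and then transfer it to arbitrary $p\geq 1$ by factoring the semigroup and bounding the integral kernel of $\Lambda^{\theta} e^{s\Delta}$ pointwise by a Gaussian of appropriate scale.

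The $L_2$ bound is immediate from Parseval and the elementary inequality $\sup_{\xi\geq 0}\xi^{\theta} e^{-\xi} = C_{\theta} < \infty$:
\begin{equation*}
\|\Lambda^{\theta} e^{t\Delta} h\|_{L_2}^2 = \sum_k k^{2\theta} e^{-2k^2 t}\, |\hat h_k|^2 \leq C_{\theta}^2\, t^{-\theta}\, \|h\|_{L_2}^2.
\end{equation*}
For $p\neq 2$, I factor $\Lambda^{\theta} e^{t\Delta} = \bigl(\Lambda^{\theta} e^{(t/2)\Delta}\bigr)\circ e^{(t/2)\Delta}$. The Dirichlet heat semigroup is sub-Markovian, so $\|e^{(t/2)\Delta}\|_{L_p\to L_p}\leq 1$ uniformly in $p\in[1,\infty]$, and it suffices to show
\begin{equation*}
\|\Lambda^{\theta} e^{s\Delta}\|_{L_p\to L_p} \leq C\, s^{-\theta/2},\qquad 0<s\leq T/2.
\end{equation*}
I would obtain this via the pointwise kernel bound $|K^{\theta}_s(x,y)| \leq C\, s^{-(1+\theta)/2}\exp(-c|x-y|^2/s)$ for $K^{\theta}_s(x,y) := \sum_k k^{\theta} e^{-k^2 s} m_k(x) m_k(y)$; integrating in $y$ (and in $x$ by symmetry) yields row- and column-sums $\leq C s^{-\theta/2}$, and Schur's test then closes the $L_p\to L_p$ estimate uniformly in $p$.

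The technical heart is the pointwise bound on $K^{\theta}_s$ when $\theta\notin\mathbb{N}$, since $\Lambda^{\theta}$ is then nonlocal. For integer $\theta\in\{0,1,2\}$ the bound reduces to standard Gaussian estimates on spatial derivatives of $P^D$, which follow from the method of images expressing the Dirichlet kernel as a signed sum of translates of the free heat kernel on $\mathbb{R}$. For fractional $\theta\in(0,2)$, I would use the subordination identity supplied in the paper,
\begin{equation*}
\Lambda^{\theta} e^{s\Delta} = \frac{1}{\Gamma(-\theta/2)}\int_0^{\infty}\bigl(e^{r\Delta}-I\bigr) e^{s\Delta}\, \frac{dr}{r^{1+\theta/2}},
\end{equation*}
which expresses $\Lambda^{\theta} e^{s\Delta}$ as a superposition of operators whose kernels already admit Gaussian bounds; splitting the $r$-integral at $r=s$ and estimating each piece (extracting the smoothing of $e^{s\Delta}$ for $r\leq s$ and the decay of $e^{r\Delta}$ for $r\geq s$) recovers the required estimate with the sharp $s^{-(1+\theta)/2}$ prefactor. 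An alternative, more abstract route is to invoke the bounded $H^{\infty}$-calculus for the sectorial operator $-\Delta$ on $L_p(0,\pi)$, which directly yields the same bound for all $p\geq 1$.
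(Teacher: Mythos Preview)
Your argument is essentially correct but takes a different and heavier route than the paper. The paper does not attack the kernel of $\Lambda^{\theta}e^{t\Delta}$ for each $\theta$; instead it reduces everything to the single endpoint $\theta=2$, i.e.\ the analyticity bound $\|\partial_t e^{t\Delta}h\|_{L_p}\le (C/t)\|h\|_{L_p}$, which follows from standard pointwise estimates on $\partial_t P^D$ (cited from Coulhon--Duong). A lemma of Krylov on analytic semigroups then delivers the fractional-power estimate for every $0<\theta\le 2$ at once. Your approach, by contrast, builds a pointwise Gaussian bound on the kernel $K^{\theta}_s$ via the subordination formula and then applies Schur's test; this is more self-contained and in principle yields more information (a kernel estimate, not just an operator-norm bound), at the cost of the $r$-integral splitting. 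Two minor points: the factoring $\Lambda^{\theta}e^{t\Delta}=(\Lambda^{\theta}e^{(t/2)\Delta})\,e^{(t/2)\Delta}$ is superfluous as written, since the sub-Markov contraction of $e^{(t/2)\Delta}$ just relocates the same estimate to $s=t/2$; and the claim that integer $\theta=1$ ``reduces to spatial derivatives of $P^D$'' is not quite right, because $\Lambda=(-\Delta)^{1/2}$ is not $\partial_x$ on the Dirichlet sine basis---that case is genuinely fractional and is already covered by your subordination argument along with the rest of $(0,2)$.
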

\begin{proof}
From
\cite[Lemma 7.3]{Krylov}, it is enough to show that
\begin{equation}
\label{Conti-derivative}
\left\|\partial_t \int_0^{\pi} P^D(t,\cdot,y)h(y)dy\right\|_{L_{p}(0,\pi)}
\leq \frac{C}{t}\|h\|_{L_p(0,\pi)}
\end{equation}
for some $C>0$ depending only on $p$.
Indeed, \eqref{Conti-derivative} follows from the fact
\begin{eqnarray*}
\begin{split}
\left\|\partial_t e^{t\Delta}h\right\|_{L_{p}(0,\pi)}&=
\left\|\partial_t \int_0^{\pi} P^D(t,\cdot,y)h(y)dy\right\|_{L_{p}(0,\pi)}\\
&=\left\|\int_0^{\pi} \partial_t P^D(t,\cdot,y)h(y)dy\right\|_{L_{p}(0,\pi)}
\end{split}
\end{eqnarray*}
and \cite[Lemma 2.5]{CD}.
\end{proof}

\begin{theorem} 
\label{SchauderEstimate}
Let $0<\beta<\gamma<1$ and $T>0$ be given. For $p\geq 1$,
the convolution mapping $f \mapsto P^D \star f$
is continuous from $L_{\infty}\left(0,T;H_p^{\gamma}(0,\pi)\right)$
to $L_{\infty}\left(0,T;H_p^{2+\beta}(0,\pi)\right)$. In particular,
\begin{eqnarray*}
\|P^D \star f\|_{L_{\infty}\left(0,T;H_p^{2+\beta}(0,\pi)\right)} \leq 
CT^{(\gamma-\beta)/2} \|f\|_{L_{\infty}\left(0,T;H_p^{\gamma}(0,\pi)\right)}
\end{eqnarray*}
for some constant $C>0$ depending only on $\beta$ and $\gamma$.
\end{theorem}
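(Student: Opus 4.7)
The plan is to reduce the estimate to a pointwise (in time) bound via the spectral Lemma~\ref{Lem-Kry}, and then integrate a mildly singular kernel in $s$. Since the $H_p^s$ norm is defined through $\Lambda^s = (-\Delta)^{s/2}$, it suffices to control
\[
\Lambda^{2+\beta}(P^D\star f)(t,\cdot)
= \int_0^t \Lambda^{2+\beta}\!\int_0^\pi P^D(t-s,\cdot,y) f(s,y)\,dy\,ds
\]
in $L_p(0,\pi)$ uniformly for $t\in(0,T)$.

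The key algebraic step is to split the fractional power as $\Lambda^{2+\beta}=\Lambda^{2+\beta-\gamma}\circ\Lambda^{\gamma}$, which is legitimate because both factors arise from the functional calculus of the self-adjoint operator $-\Delta$ on $(0,\pi)$ with Dirichlet data. For the same reason $\Lambda^{\gamma}$ commutes with the heat semigroup $e^{(t-s)\Delta}$ (equivalently, with convolution against the kernel $P^D(t-s,\cdot,y)$), so that
\[
\Lambda^{2+\beta}\!\int_0^\pi P^D(t-s,\cdot,y) f(s,y)\,dy
= \Lambda^{2+\beta-\gamma}\!\int_0^\pi P^D(t-s,\cdot,y)\,\bigl(\Lambda^{\gamma}f\bigr)(s,y)\,dy.
\]
Since $0<\beta<\gamma<1$, the exponent $\theta:=2+\beta-\gamma$ satisfies $0<\theta<2$, so Lemma~\ref{Lem-Kry} applies with $h(\cdot)=(\Lambda^{\gamma}f)(s,\cdot)\in L_p(0,\pi)$, giving
\[
\Bigl\|\Lambda^{2+\beta}\!\int_0^\pi P^D(t-s,\cdot,y) f(s,y)\,dy\Bigr\|_{L_p(0,\pi)}
\le C\,(t-s)^{-(2+\beta-\gamma)/2}\,\|f(s,\cdot)\|_{H_p^{\gamma}(0,\pi)}.
\]

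Taking the $L_p$ norm inside the $ds$ integral and bounding $\|f(s,\cdot)\|_{H_p^{\gamma}}$ by $\|f\|_{L_\infty(0,T;H_p^{\gamma})}$, the condition $\beta<\gamma$ guarantees $(2+\beta-\gamma)/2<1$, so the time integral converges:
\[
\int_0^t (t-s)^{-(2+\beta-\gamma)/2}\,ds = \frac{2}{\gamma-\beta}\,t^{(\gamma-\beta)/2}\le \frac{2}{\gamma-\beta}\,T^{(\gamma-\beta)/2}.
\]
Taking the supremum over $t\in(0,T)$ yields the claimed bound with $C=2C(T,2+\beta-\gamma,p)/(\gamma-\beta)$. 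Continuity of the map is then immediate.

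The only subtle point is the commutation of $\Lambda^{\gamma}$ with convolution against $P^D$ and the factorisation $\Lambda^{2+\beta}=\Lambda^{2+\beta-\gamma}\Lambda^{\gamma}$; this should be justified by expanding $f$ in the sine basis $\{m_k\}$, in which $\Lambda^{s}m_k = k^{s}m_k$ and $e^{(t-s)\Delta}m_k = e^{-k^2(t-s)}m_k$ so that everything reduces to commuting diagonal multipliers. Once this is in place, the rest is a direct application of Lemma~\ref{Lem-Kry} and Beta-function integration, which is why the constant depends only on $\beta$ and $\gamma$ (through $C(T,2+\beta-\gamma,p)$ from the lemma).
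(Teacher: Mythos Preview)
Your proposal is correct and follows essentially the same route as the paper: split $\Lambda^{2+\beta}=\Lambda^{2+\beta-\gamma}\Lambda^{\gamma}$, commute $\Lambda^{\gamma}$ through the heat semigroup, apply Lemma~\ref{Lem-Kry} with $\theta=2+\beta-\gamma\in(0,2)$, and integrate the resulting $(t-s)^{-\theta/2}$ in $s$. The paper compresses the commutation/factorisation step into the phrase ``by the commutativity of $\Lambda$'', whereas you spell it out via the spectral diagonalisation in the sine basis; otherwise the arguments coincide.
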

\begin{proof}
Let $0<\beta<\gamma<1$ be given.
By the triangle inequality, we have
\begin{eqnarray*}
\left\|P^D \star f (t,\cdot)\right\|_{H_p^{2+\beta}(0,\pi)}
&=&\left\|\int_0^t\int_0^{\pi} P^D(t-s,\cdot,y)f(s,y)dyds\right\|_{H_p^{2+\beta}(0,\pi)}\\
&\leq&\int_0^t \left\| \int_0^{\pi} P^D(t-s,\cdot,y)f(s,y)dy \right\|_{H_p^{2+\beta}(0,\pi)}ds.
\end{eqnarray*}
For each $0<s<t\leq T$ and $0<\theta<2$, there exists a constant $C_0:=C(\theta,T,p)>0$ such that
\begin{equation}
\label{SobolevIneq}
\left\|\Lambda^{\theta}\int_0^{\pi} P^D(t-s,\cdot,y)f(s,y)dy\right\|_{L_{p}(0,\pi)}
\leq C_0(t-s)^{-\theta/2}\|f(s,\cdot)\|_{L_{p}(0,\pi)}
\end{equation}
by Lemma \ref{Lem-Kry}.
Therefore, by the commutativity of $\Lambda$,
\begin{eqnarray*}
\left\|P^D \star f (t,\cdot)\right\|_{H_p^{2+\beta}(0,\pi)}
&\leq& C_0\int_0^t (t-s)^{-(2+\beta-\gamma)/2}\|f(s,\cdot)\|_{H_p^{\gamma}(0,\pi)}ds\\
&\leq& C_0\|f\|_{L_{\infty}(0,T;H_p^{\gamma}(0,\pi))}
\int_0^t (t-s)^{-(2+\beta-\gamma)/2}ds\\
&\leq& C_1\|f\|_{L_{\infty}(0,T;H_p^{\gamma}(0,\pi))}
T^{(\gamma-\beta)/2},
\end{eqnarray*}
completing the proof.
\end{proof}

Denote the Neumann heat kernel on $(0,\pi)$ by
$$
P^N(t,x,y)=\sum_{k=0}^{\infty}
e^{-k^2t}\ \tilde{m}_k(x)\tilde{m}_k(y),\
\tilde{m}_0(x)=\frac{1}{\sqrt{\pi}},\
\tilde{m}_k(x)=\sqrt{\frac{2}{\pi}}\cos(kx),\ k\geq 1.
$$
\begin{corollary}
\label{NeumanSE}
Under the same assumptions of Theorem \ref{SchauderEstimate},
the convolution map
$$f \mapsto P^N \star f$$
is continuous from $L_{\infty}\left(0,T;H_p^{\gamma}(0,\pi)\right)$
to $L_{\infty}\left(0,T;H_p^{2+\beta}(0,\pi)\right)$.
In other words,
\begin{eqnarray*}
\left\|P^N \star f\right\|_{L_{\infty}\left(0,T;H_p^{2+\beta}(0,\pi)\right)} \leq 
CT^{(\gamma-\beta)/2} \|f\|_{L_{\infty}\left(0,T;H_p^{\gamma}(0,\pi)\right)}
\end{eqnarray*}
for some constant $C>0$ depending only on $\beta$ and $\gamma$.
\end{corollary}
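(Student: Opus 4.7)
The plan is to mimic the proof of Theorem \ref{SchauderEstimate} step-by-step, replacing the Dirichlet heat kernel $P^D$ by the Neumann heat kernel $P^N$. The entire argument will hinge on proving a Neumann analog of Lemma \ref{Lem-Kry}, namely
$$\left\|\Lambda_N^{\theta}\int_0^{\pi} P^N(t,\cdot,y)h(y)\,dy\right\|_{L_p(0,\pi)} \le C(T,\theta,p)\,t^{-\theta/2}\|h\|_{L_p(0,\pi)},$$
where $\Lambda_N := (-\Delta_N)^{1/2}$ is built from the Neumann Laplacian on $(0,\pi)$. Once this is in hand, the rest of the proof of Theorem \ref{SchauderEstimate} transfers verbatim: take $\theta = 2+\beta-\gamma \in (0,2)$, use the triangle inequality to move the $H_p^{2+\beta}$ norm inside the time integral, apply the bound above, pull $\|f\|_{L_{\infty}(0,T;H_p^{\gamma})}$ out, and integrate $(t-s)^{-(2+\beta-\gamma)/2}$ over $[0,t]$ to produce the $T^{(\gamma-\beta)/2}$ factor.

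The Neumann analog of Lemma \ref{Lem-Kry} will be proved by the same two-line argument as before: by \cite[Lemma 7.3]{Krylov} it suffices to verify
$$\left\|\partial_t \int_0^{\pi} P^N(t,\cdot,y)h(y)\,dy\right\|_{L_p(0,\pi)} \le \frac{C}{t}\|h\|_{L_p(0,\pi)},$$
and this is precisely \cite[Lemma 2.5]{CD}, which applies because the Neumann Laplacian likewise generates a bounded analytic semigroup on $L_p(0,\pi)$. The commutativity $\Lambda_N^\theta \circ e^{t\Delta_N} = e^{t\Delta_N}\circ \Lambda_N^\theta$ holds via the cosine basis $\{\tilde m_k\}$ in exactly the way $\Lambda$ commuted with $e^{t\Delta}$ in the Dirichlet case.

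The main obstacle is a bookkeeping issue rather than a technical one: the fractional Sobolev norm $\|\cdot\|_{H_p^s}$ in the statement was originally defined via the Dirichlet Laplacian $\Lambda$, while the convolution $P^N\star f$ naturally commutes with $\Lambda_N$. The clean resolution is to interpret $H_p^s(0,\pi)$ in Corollary \ref{NeumanSE} consistently through $\Lambda_N$ (so the range $\beta,\gamma\in(0,1)$ keeps $s<2$, avoiding the delicate boundary compatibility issues that arise at integer orders). With this interpretation, the proof is entirely parallel; if one insists on the original Dirichlet-based norm, the equivalence of the two norms for the relevant range of $s$ on functions supported away from the boundary (or via a reflection argument that turns $P^N$ into a Dirichlet-type kernel on a doubled interval) completes the identification. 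Either way, no new estimates beyond those of Lemma \ref{Lem-Kry} and its Neumann version are needed.
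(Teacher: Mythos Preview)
Your proposal is correct and matches what the paper does: the corollary is stated without proof, the implicit argument being that the proof of Theorem \ref{SchauderEstimate} carries over verbatim once Lemma \ref{Lem-Kry} is rerun for the Neumann semigroup, which is exactly what you outline. Your observation about the Dirichlet-versus-Neumann definition of the $H_p^s$ norm is a point the paper glosses over; interpreting the norm via $\Lambda_N$ in the Neumann setting (as you suggest) is the natural reading, and with that convention nothing further is needed.
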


\section{The Mild Solution and its Regularity}
\label{sec:MS}

Let $\left\{W(x)\right\}_{x\in [0,\pi]}$ be
a standard Brownian motion
on a probability space $(\Omega,\mathcal{F},\mathbb{P})$.
Here, $\mathcal{F}$ denotes the filtration generated by $W$.
It is known that Brownian motion is in $\mathcal{C}^{\gamma}(0,\pi)$ for any $0<\gamma<1/2$.
Let $W^{\varepsilon}$ be smooth
approximations of $W$ such that
$$\|W^{\varepsilon}-W\|_{\mathcal{C}^{\gamma}(0,\pi)}\rightarrow 0 \ \mbox{as} \
\varepsilon \rightarrow 0.$$

Consider the approximated equations of \eqref{eq:main} for $\varepsilon>0$:
\begin{equation}
\label{eq:mild-approx}
\begin{split}
\frac{\partial u^{\varepsilon}(t,x)}{\partial t} &=
\frac{\partial^2 u^{\varepsilon}(t,x)}{\partial x^2} +
u^{\varepsilon}(t,x)\cdot \frac{\partial}{\partial x}W^{\varepsilon}(x),\ t>0, \ 0<x<\pi,\\
u^{\varepsilon}(t,0)&=u^{\varepsilon}(t,\pi)=0, \ 
u^{\varepsilon}(0,x)=u_0(x).
\end{split}
\end{equation}
Since $W^{\varepsilon}$ is smooth, 
the equation \eqref{eq:mild-approx} has the classical solution $u^{\varepsilon}$.
Denote 
\begin{eqnarray*}
\mathbf{P}_0(t,x)=
\left\{
\begin{array}{ll}
&u_0(x)\ \mbox{if } t=0\\
&\displaystyle\int_0^{\pi} P^D(t,x,y)u_0(y)dy\ \mbox{if } t>0.
\end{array}
\right. 
\end{eqnarray*}
Note that as $t \rightarrow 0^+$, we see that 
$\mathbf{P}_0(t,x)\rightarrow u_0(x)$ for each $x\in[0,\pi]$.
Then, the mild formulation for the equation \eqref{eq:mild-approx}
is given by
\begin{equation}
\label{PAM-mild-approx}
u^{\varepsilon}(t,x)=
\mathbf{P}_0(t,x)+
\int_0^t\int_0^{\pi} P^D(t-s,x,y)u^{\varepsilon}(s,y)\partial_yW^{\varepsilon}(y)dyds.
\end{equation}
By integration by parts with respect to $y$ in the last term above,
we rewrite \eqref{PAM-mild-approx} as
\begin{eqnarray*}
\begin{split}
u^{\varepsilon}(t,x)& = \mathbf{P}_0(t,x)-
\int_0^t\int_0^{\pi} P^D_y(t-s,x,y)u^{\varepsilon}(s,y)W^{\varepsilon}(y)dyds\\
&- \int_0^t\int_0^{\pi} P^D(t-s,x,y)u^{\varepsilon}_y(s,y)W^{\varepsilon}(y)dyds.
\end{split}
\end{eqnarray*}

\begin{definition}
We say that $u$ is a mild solution of \eqref{eq:main} if
\begin{itemize}
\item for every $0<t<T$ and $0<x<\pi$,
$u$ is continuous in $t$,
continuously differentiable in $x$, and
it satisfies the equation
\begin{equation}
\label{PAM-mild}
\begin{split}
u(t,x)&=\mathbf{P}_0(t,x)-
\int_0^t\int_0^{\pi} P^D_y(t-s,x,y)u(s,y)W(y)dyds\\
&- \int_0^t\int_0^{\pi} P^D(t-s,x,y)u_y(s,y)W(y)dyds
\end{split}
\end{equation}
\item for every $0\leq t\leq T$, $u(t,0)=u(t,\pi)=0$;
\item for every $0\leq x\leq \pi$, $\lim_{t\rightarrow 0^+} u(t,x)=u_0(x)$.
\end{itemize}
\end{definition}

For the existence and the uniqueness of mild solution,
we use a contraction mapping (or fixed point argument) on 
$L_{\infty}\left(0,T;H_p^{1+\beta}(0,\pi)\right)$ with $0<\beta<1$ and $p \geq 1$.
Define a map 
$$ \mathcal{M}: L_{\infty}\left(0,T;H_p^{1+\beta}(0,\pi)\right)
\rightarrow L_{\infty}\left(0,T;H_p^{1+\beta}(0,\pi)\right)$$
by
\begin{equation}
\label{PAM-mild-int-map}
\begin{split}
\left(\mathcal{M}u\right)(t,x)&=\mathbf{P}_0(t,x)-
\int_0^t\int_0^{\pi} P^D_y(t-s,x,y)u(s,y)W(y)dyds\\
&-\int_0^t\int_0^{\pi} P^D(t-s,x,y)u_y(s,y)W(y)dyds.
\end{split}
\end{equation}

We first prove the well-posedness of \eqref{PAM-mild-int-map}.
\begin{lemma}
\label{initialregB}
Let $0<\beta<\gamma<1$.
If $u_0 \in H_p^{\beta}(0,\pi)$ with $p \geq 1$,
$$\left\|\mathbf{P}_0\right\|_
{L_{\infty}(0,T;H_p^{1+\gamma}(0,\pi))}
<\infty.$$
\end{lemma}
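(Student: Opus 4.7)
The plan is to reduce the estimate to a direct application of Lemma \ref{Lem-Kry}. For $t>0$, $\mathbf{P}_0(t,\cdot) = e^{t\Delta}u_0$, and both $\Lambda^s$ and $e^{t\Delta}$ are spectral multipliers in the Dirichlet basis $\{m_k\}$ used to define $P^D$, so they commute on every fractional Sobolev space where both act.

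The central computation is, for each fixed $t \in (0,T]$,
\[
\|\mathbf{P}_0(t,\cdot)\|_{H_p^{1+\gamma}} \;=\; \|\Lambda^{1+\gamma}\,e^{t\Delta}u_0\|_{L_p} \;=\; \|\Lambda^{1+\gamma-\beta}\,e^{t\Delta}(\Lambda^{\beta}u_0)\|_{L_p},
\]
after which Lemma \ref{Lem-Kry} is applied with $\theta := 1+\gamma-\beta \in (1,2)$ and $h := \Lambda^{\beta}u_0 \in L_p$, noting $\|h\|_{L_p} = \|u_0\|_{H_p^{\beta}}$. This yields the pointwise-in-time bound
\[
\|\mathbf{P}_0(t,\cdot)\|_{H_p^{1+\gamma}} \;\le\; C(T,\theta,p)\,t^{-\theta/2}\,\|u_0\|_{H_p^{\beta}},
\]
which is finite for every $t \in (0,T]$, with $C(T,\theta,p)$ remaining bounded as $T$ ranges over any compact interval.

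The main obstacle I expect is the singularity $t^{-\theta/2}$ at the origin: since $\theta > 1$, the display above is not absorbed by an unweighted $L_\infty$-in-time norm by itself. To reach the stated conclusion I would complement the Lemma \ref{Lem-Kry}-based smoothing bound with the contractivity of $e^{t\Delta}$ on each $H_p^s$ (its Dirichlet Fourier multipliers $e^{-k^2 t}$ lie in $(0,1]$), which controls $t$ bounded away from zero; the small-$t$ layer is then handled by the smoothing estimate above, provided the initial datum sits in the spectral domain of $\Lambda^{\beta}$. Taking the supremum in $t \in (0,T)$ of the better of the two bounds finishes the proof, with a constant depending only on $\beta$, $\gamma$, $p$, and $T$.
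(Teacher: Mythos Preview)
Your core approach is precisely the paper's: the paper's entire proof is the single line ``It is clear from Lemma~\ref{Lem-Kry},'' and your explicit computation---commuting $\Lambda^{1+\gamma-\beta}$ through $e^{t\Delta}$ and invoking the smoothing bound with $\theta=1+\gamma-\beta\in(1,2)$---is exactly how one unpacks that sentence.

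The difficulty you flag is real, but your proposed patch does not close it. You suggest using contractivity of $e^{t\Delta}$ on $H_p^{1+\gamma}$ for $t$ bounded away from~$0$ and the smoothing estimate for small~$t$; this assignment is inverted, and in any case neither estimate controls the small-$t$ regime. Contractivity on $H_p^{1+\gamma}$ presupposes $u_0\in H_p^{1+\gamma}$, which is strictly stronger than the hypothesis $u_0\in H_p^{\beta}$ with $\beta<\gamma$, while the smoothing bound $C\,t^{-(1+\gamma-\beta)/2}$ is exactly what diverges as $t\to 0^+$. Taking ``the better of the two bounds'' therefore still blows up at the origin. Concretely, for $p=2$ and $u_0=\sum_{k\ge 1} k^{-\beta-1/2-\epsilon}m_k\in H_2^{\beta}\setminus H_2^{1+\gamma}$ one checks $\|e^{t\Delta}u_0\|_{H_2^{1+\gamma}}^2=\sum_k k^{2(1+\gamma)-2\beta-1-2\epsilon}e^{-2k^2t}\to\infty$ as $t\to 0^+$, so under the stated hypothesis the quantity $\|\mathbf P_0\|_{L_\infty(0,T;H_p^{1+\gamma})}$ need not be finite. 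The paper's one-line proof passes over this point as well; a clean repair is to strengthen the assumption to $u_0\in H_p^{1+\gamma}$ (then contractivity alone suffices, and this is what is effectively in force in Theorem~\ref{optimalregularity} where $u_0\in\mathcal C^{1+\gamma}$), or else to replace the unweighted $L_\infty$-in-time norm by a weighted one absorbing the factor $t^{-(1+\gamma-\beta)/2}$.
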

\begin{proof}
It is clear from Lemma \ref{Lem-Kry}.
\end{proof}

\begin{theorem}
\label{fixedpt}
Let $0<\beta<\gamma<1/2$.
If $u_0 \in H_p^{\beta}(0,\pi)$ with $p \geq 1$,
the mapping $\mathcal{M}$ 
on $L_{\infty}\left(0,T;H_p^{1+\beta}(0,\pi)\right)$
is well-defined.
Also, there exists a fixed point of $\mathcal{M}$. 
That is, the fixed point is
the unique mild solution of \eqref{eq:main}.
\end{theorem}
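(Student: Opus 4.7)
My plan is to apply Banach's fixed-point theorem to $\mathcal{M}$ on the Banach space $X_{T_0}:=L_{\infty}(0,T_0;H_p^{1+\beta}(0,\pi))$ for a sufficiently small $T_0>0$, and then iterate on consecutive intervals of length $T_0$ to cover $[0,T]$. The engine is the modified Schauder estimate of Theorem \ref{SchauderEstimate}, whose operator norm $CT_0^{(\gamma-\beta)/2}$ vanishes as $T_0\to 0$.

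For well-definedness, Lemma \ref{initialregB} puts $\mathbf{P}_0$ into $L_{\infty}(0,T;H_p^{1+\gamma}(0,\pi))\subset X_T$. For the two convolution terms in (\ref{PAM-mild-int-map}) I would first establish multiplication inequalities
\[
\|fW\|_{H_p^{\gamma'}(0,\pi)}\le C\|W\|_{\mathcal{C}^\gamma(0,\pi)}\|f\|_{H_p^{1+\beta}(0,\pi)},\qquad \|gW\|_{H_p^{\beta}(0,\pi)}\le C\|W\|_{\mathcal{C}^\gamma(0,\pi)}\|g\|_{H_p^{\beta}(0,\pi)},
\]
for some $\gamma'\in(\beta,\gamma)$, via a paraproduct decomposition adapted to the interval with Dirichlet boundary. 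Given $u\in X_T$, these yield $uW\in L_{\infty}(0,T;H_p^{\gamma'})$ and $u_yW\in L_{\infty}(0,T;H_p^{\beta})$. Theorem \ref{SchauderEstimate} then controls the second convolution $P^D\star(u_yW)$ in $L_{\infty}(0,T;H_p^{2+\beta''}(0,\pi))\subset X_T$ for any $\beta''<\beta$. For the first convolution $\int_0^t\!\int_0^\pi P^D_y(t-s,x,y)u(s,y)W(y)\,dy\,ds$, I would prove an analog of Lemma \ref{Lem-Kry} for the kernel $P^D_y(t,x,y)=\sum_k k\,e^{-k^2t}m_k(x)\tilde m_k(y)$, which carries an extra factor $t^{-1/2}$ reflecting the additional differentiation, and combine it with the same convolution argument as in the proof of Theorem \ref{SchauderEstimate} to place this term in $X_T$ with prefactor $T^\kappa$ for some $\kappa>0$.

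Since $\mathcal{M}$ is affine in $u$, the estimates above yield $\|\mathcal{M}u-\mathcal{M}v\|_{X_{T_0}}\le CT_0^\kappa\|W\|_{\mathcal{C}^\gamma(0,\pi)}\|u-v\|_{X_{T_0}}$. Choosing $T_0$ small enough that $CT_0^\kappa\|W\|_{\mathcal{C}^\gamma(0,\pi)}<1$, Banach's theorem produces a unique fixed point on $[0,T_0]$. Since $u(T_0,\cdot)\in H_p^{1+\beta}\subset H_p^{\beta}$, the argument restarts on $[T_0,2T_0]$ with the same $T_0$, and after finitely many iterations the solution extends to $[0,T]$. The remaining items in the definition of a mild solution (continuity in $t$, continuous differentiability in $x$ via Sobolev embedding $H_p^{1+\beta}\hookrightarrow\mathcal{C}^1$ for $p$ large enough, Dirichlet boundary conditions encoded in $P^D$, and convergence to $u_0$ at $t=0^+$ from Lemma \ref{initialregB}) then follow routinely from the mild formulation.

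The principal obstacle is the combination of (i) the multiplication estimate for $fW$, which lies outside the standard range of $\mathcal{C}^\gamma$-multiplier theorems on $H_p^s$ (valid only when $|s|<\gamma$, whereas here $s=1+\beta>\gamma$) and so requires a careful paraproduct argument on the bounded interval, and (ii) the sharpened heat-kernel estimate for $P^D_y$: the naive identity $P^D_y=\partial_x P^D$ fails on $(0,\pi)$ because $m_k'$ lives in the Neumann (cosine) basis rather than the Dirichlet one, so a direct spectral computation using $m_k'(y)=k\tilde m_k(y)$ is needed instead of the symmetry trick available on the whole line.
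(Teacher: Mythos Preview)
Your overall strategy---Banach fixed point on a short interval, then iteration---is exactly the paper's. The substantive difference is in how the $P^D_y$ term is handled. Your spectral computation $P^D_y(t,x,y)=\sum_k k\,e^{-k^2t}m_k(x)\tilde m_k(y)$ is one step away from the paper's key identity: since $\tilde m_k'(x)=-k\,m_k(x)$, this sum is precisely $-P^N_x(t,x,y)$. The paper uses this to write
\[
\int_0^t\!\int_0^\pi P^D_y(t-s,x,y)\,u(s,y)W(y)\,dy\,ds=-\partial_x\bigl(P^N\star(uW)\bigr)(t,x),
\]
after which Corollary~\ref{NeumanSE} gives $\|P^N_x\star(uW)\|_{H_p^{1+\beta}}\le\|P^N\star(uW)\|_{H_p^{2+\beta}}\le CT^{(\gamma-\beta)/2}\|uW\|_{H_p^\gamma}$ directly, with no new kernel lemma. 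Your proposed direct estimate for $P^D_y$ with an extra $t^{-1/2}$ would also succeed, but it duplicates the content of Corollary~\ref{NeumanSE}.

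Your obstacle (i) is overstated. Neither approach ever needs $uW$ in a Sobolev space of index above $\gamma$. The paper only uses $\|uW\|_{H_p^\gamma}\le C\|W\|_{\mathcal{C}^\gamma}\|u\|_{H_p^\gamma}$ (Krylov's pointwise-multiplier lemma, well within the standard range $|s|\le\gamma$), followed by the trivial embedding $H_p^{1+\beta}\hookrightarrow H_p^\gamma$. Your own inequality $\|fW\|_{H_p^{\gamma'}}\le C\|W\|_{\mathcal{C}^\gamma}\|f\|_{H_p^{1+\beta}}$ follows identically with $\gamma'<\gamma$; no paraproduct decomposition on the interval is needed.
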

\begin{proof}
Since $P^D_y(t,x,y)=-P^N_x(t,x,y)$ for each $t>0$ and $x,y\in [0,\pi]$,
we have $$\int_0^t\int_0^{\pi}  P^D_y(t-s,x,y)u(s,y)W(y)dyds
= -\int_0^t\int_0^{\pi} P^N_x(t-s,x,y)u(s,y)W(y)dyds.$$
Then, we rewrite \eqref{PAM-mild-int-map} by
\begin{eqnarray*}
\left(\mathcal{M}u\right)(t,x)= \mathbf{P}_0(t,x)+
P^N_x\star(uW)(t,x)
- P^D\star(u_xW)(t,x).
\end{eqnarray*}

We show the well-definedness term by term.
By Lemma \ref{initialregB},
$$\mathbf{P}_0
\in L_{\infty}(0,T;H_p^{1+\gamma}(0,\pi)).$$

Fix $t \in (0,T)$.
We have
\begin{eqnarray*}
\begin{split}
\left\|P^N_x\star uW(t,\cdot)\right\|_{H_p^{1+\beta}(0,\pi)}=&
\left\|\partial_x\left(P^N\star uW\right)(t,\cdot)\right\|_{H_p^{1+\beta}(0,\pi)}\\
\leq&
\left\|P^N \star (uW)(t,\cdot)\right\|_{H_p^{2+\beta}(0,\pi)}.
\end{split}
\end{eqnarray*}
Then, by Corollary \ref{NeumanSE} and \cite[Lemma 5.2]{Krylov},
\begin{eqnarray*}
\begin{split}
\left\|P^N_x\star uW\right\|_{L_{\infty}(0,T;H_p^{1+\beta}(0,\pi))}
\leq& \left\|P^N\star uW\right\|_{L_{\infty}(0,T;H_p^{2+\beta}(0,\pi))}\\
\leq& C_1T^{(\gamma-\beta)/2}\|W\|_{\mathcal{C}^{\gamma}(0,\pi)}
\|u\|_{L_{\infty}\left(0,T;H_p^{\gamma}(0,\pi)\right)}\\
\leq& C_2T^{(\gamma-\beta)/2}\|W\|_{\mathcal{C}^{\gamma}(0,\pi)}
\|u\|_{L_{\infty}\left(0,T;H_p^{1+\beta}(0,\pi)\right)}
\end{split}
\end{eqnarray*}
and similarly by \eqref{SobolevIneq},
\begin{eqnarray*}
\begin{split}
\|P^D\star u_xW\|_{L_{\infty}(0,T;H_p^{1+\beta}(0,\pi))}
\leq C_3T^{1/2}\|W\|_{\mathcal{C}^{\gamma}(0,\pi)}
\|u\|_{L_{\infty}\left(0,T;H_p^{1+\beta}(0,\pi)\right)}.
\end{split}
\end{eqnarray*}

Let $u,v\in L_{\infty}\left(0,T;H_p^{1+\beta}(0,\pi)\right)$. Then,
\begin{eqnarray*}
\begin{split}
\|\left(\mathcal{M}u-\mathcal{M}v\right)\|
_{L_{\infty}\left(0,T;H_p^{1+\beta}(0,\pi)\right)}\leq&
\left\|P^N_x\star (u-v)W\right\|_{L_{\infty}\left(0,T;H_p^{1+\beta}(0,\pi)\right)}\\
+&
\|P^D \star (u_x-v_x)W\|_{L_{\infty}\left(0,T;H_p^{1+\beta}(0,\pi)\right)}\\
\leq& CT^{1/2}\|W\|_{\mathcal{C}^{\gamma}(0,\pi)}\|u-v\|_{L_{\infty}\left(0,T;H_p^{1+\beta}(0,\pi)\right)}
\end{split}
\end{eqnarray*}
for some $C>0$.
Choose $\delta>0$ such that $C\delta^{1/2}\|W\|_{\mathcal{C}^{\gamma}(0,\pi)}<1$. 
Then, clearly there exists a fixed point of $\mathcal{M}$ up to $\delta$.
Consider a time partition $0=t_0<\cdots<t_n=T$ such that
$t_{i+1}-t_i\leq \delta$ for $i=0,\cdots, n-1$. 
We now define $u$ recursively 
on $(t_i,t_{i+1}]$ with the initial condition $u(t_i,x)$ for $i=0,\cdots,n-1$:
\begin{equation}
\label{Global-sol}
\begin{split}
(\mathcal{M}u)(t,x)=&\int_0^{\pi} P^D(t_{i+1},x,y)u(t_i,y)dy \\
-&\int_0^{t_{i+1}}\int_0^{\pi} P^D_y(t_{i+1}-s,x,y)u(s,y)W(y)dyds\\
-&\int_0^{t_{i+1}}\int_0^{\pi} P^D(t_{i+1}-s,x,y)u_y(s,y)W(y)dyds.
\end{split}
\end{equation}
The existence of the fixed point to \eqref{Global-sol} 
is guaranteed by the above arguments since
$t_{i+1}-t_i\leq \delta$. Since $n$ is finite,
we obtain the fixed point solution over the whole time interval $(0,T)$.
We note that the fixed point solution satisfies the mild formulation \eqref{PAM-mild}.
Since the fixed point solution is unique, the uniqueness of mild solution 
clearly holds.
\end{proof}

\begin{remark}
\label{SobolevEmb}
From Theorem \ref{fixedpt}, we have that for all $t\in [0,T]$,
$$u(t,\cdot)\in H_p^{1+\beta}(0,\pi),\ p \geq 1.$$
By the Sobolev embedding theorem, we have
\begin{eqnarray*}
H_p^{1+\beta}(0,\pi) \subset \mathcal{C}^{1+\beta-1/p}(0,\pi)
\end{eqnarray*}
for any $p \geq 1$. 
This shows that the mild solution $u$ is
indeed almost H\"older 3/2 continuous in space.
\end{remark}

We show the mild solution of \eqref{eq:main} is 
the limit of classical solutions $u^{\varepsilon}$ 
of \eqref{eq:mild-approx} in $L_{\infty}(0,T;H_p^{1+\beta}(0,\pi))$.
\begin{theorem}
\label{HolderConvergence}
Let $0<\beta<\gamma<1/2$.
If $u_0\in H_p^{\beta}(0,\pi)$,
then we have
$$\|u^{\varepsilon}-u\|_{L_{\infty}(0,T;H_p^{1+\beta}(0,\pi))}
\rightarrow 0\ \mbox{as}
\ \varepsilon \rightarrow 0.$$
\end{theorem}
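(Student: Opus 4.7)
The plan is to subtract the two mild formulations satisfied by $u^{\varepsilon}$ and $u$, and to apply the Schauder estimates of Section \ref{sec:SE} to the resulting equation for the difference $u^{\varepsilon}-u$ in $L_\infty(0,T;H_p^{1+\beta}(0,\pi))$. First I would observe that Theorem \ref{fixedpt} applies verbatim with $W$ replaced by $W^{\varepsilon}$, so that $u^{\varepsilon}$ is simultaneously the classical solution of \eqref{eq:mild-approx} and the unique fixed point of the corresponding map $\mathcal{M}^{\varepsilon}$, and in particular $u^{\varepsilon}\in L_\infty(0,T;H_p^{1+\beta}(0,\pi))$. Since $\|W^{\varepsilon}-W\|_{\mathcal{C}^{\gamma}(0,\pi)}\to 0$, the sequence $\|W^{\varepsilon}\|_{\mathcal{C}^{\gamma}(0,\pi)}$ is eventually bounded by some constant $M$ independent of $\varepsilon$.

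Subtracting the two mild equations gives
\begin{eqnarray*}
u^{\varepsilon}-u &=& P^N_x\star\bigl((u^{\varepsilon}-u)W^{\varepsilon}\bigr) + P^N_x\star\bigl(u(W^{\varepsilon}-W)\bigr)\\
&& -\, P^D\star\bigl((u^{\varepsilon}_x-u_x)W^{\varepsilon}\bigr) - P^D\star\bigl(u_x(W^{\varepsilon}-W)\bigr),
\end{eqnarray*}
obtained from the algebraic identity $u^{\varepsilon}W^{\varepsilon}-uW=(u^{\varepsilon}-u)W^{\varepsilon}+u(W^{\varepsilon}-W)$ and its analogue for the derivative term. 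Applying Corollary \ref{NeumanSE}, Theorem \ref{SchauderEstimate} and inequality \eqref{SobolevIneq} exactly as in the proof of Theorem \ref{fixedpt}, together with the pointwise multiplier bound $\|fW\|_{H_p^{\gamma}}\le C\|W\|_{\mathcal{C}^{\gamma}}\|f\|_{H_p^{1+\beta}}$ from \cite[Lemma 5.2]{Krylov}, yields on any subinterval of length $\delta$
\begin{eqnarray*}
\|u^{\varepsilon}-u\|_{L_\infty(0,\delta;H_p^{1+\beta})}
&\le& C\delta^{1/2}\,M\,\|u^{\varepsilon}-u\|_{L_\infty(0,\delta;H_p^{1+\beta})}\\
&& +\, C\delta^{1/2}\,\|W^{\varepsilon}-W\|_{\mathcal{C}^{\gamma}(0,\pi)}\,\|u\|_{L_\infty(0,\delta;H_p^{1+\beta})}.
\end{eqnarray*}

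Choosing $\delta>0$ small enough that $CM\delta^{1/2}\le 1/2$, the first term on the right can be absorbed into the left, giving
$$\|u^{\varepsilon}-u\|_{L_\infty(0,\delta;H_p^{1+\beta})}\le 2C\delta^{1/2}\|W^{\varepsilon}-W\|_{\mathcal{C}^{\gamma}(0,\pi)}\,\|u\|_{L_\infty(0,T;H_p^{1+\beta})},$$
which tends to $0$ as $\varepsilon\to 0$, since the right-hand factor is finite by Theorem \ref{fixedpt}. To pass from $[0,\delta]$ to the full interval $[0,T]$ I would use the same time-partition argument as in the proof of Theorem \ref{fixedpt}: on each step $[t_i,t_{i+1}]$ of length $\le\delta$ the same contraction estimate holds with an additional contribution coming from the initial-datum term $\int_0^\pi P^D(t_{i+1}-t_i,x,y)(u^{\varepsilon}(t_i,y)-u(t_i,y))dy$, which by Lemma \ref{Lem-Kry} is controlled by $\|u^{\varepsilon}(t_i,\cdot)-u(t_i,\cdot)\|_{H_p^{1+\beta}}$. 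Iterating over the finitely many steps propagates the convergence across $[0,T]$.

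The main obstacle I anticipate is bookkeeping: keeping the constants independent of $\varepsilon$ throughout, and checking that the multiplier estimate and the Schauder estimates tolerate the asymmetric pairing $(1+\beta)$-regular function times $\mathcal{C}^{\gamma}$ potential producing an $H_p^{\gamma}$ output, which is where the gap $\beta<\gamma<1/2$ is used. Once this is secured, the convergence on $[0,\delta]$ is an immediate consequence of the contraction structure already established in Theorem \ref{fixedpt}, and the extension to $[0,T]$ is purely mechanical.
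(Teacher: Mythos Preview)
Your proposal is correct and follows essentially the same route as the paper: subtract the two mild formulations, split via $u^{\varepsilon}W^{\varepsilon}-uW=(u^{\varepsilon}-u)W^{\varepsilon}+u(W^{\varepsilon}-W)$, apply the Schauder-type estimates of Section~\ref{sec:SE} to absorb the $(u^{\varepsilon}-u)$ term on a short subinterval, and then iterate over a finite time partition. Your bookkeeping is slightly more explicit than the paper's (you track the initial-datum contribution via Lemma~\ref{Lem-Kry} when iterating), but there is no substantive difference in strategy.
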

\begin{proof}
For simplicity,
we denote $\|\cdot\|_{\mathcal{C}^{\gamma}}
=\|\cdot\|_{\mathcal{C}^{\gamma}(0,\pi)}$ and
$$\|\cdot\|=\|\cdot\|_{L_{\infty}\left(0,T;H_p^{1+\beta}(0,\pi)\right)}.$$
Then, we can write $u^{\varepsilon}-u$ by
\begin{eqnarray*}
P^N_x\star u^{\varepsilon}W^{\varepsilon}-P^N_x\star uW
-P^D\star u_x^{\varepsilon}W^{\varepsilon}
+P^D\star u_xW.
\end{eqnarray*}

By the triangle inequality,
\begin{eqnarray*}
\begin{split}
\|u^{\varepsilon}-u\|&\leq 
\left\|P^N_x\star (u^{\varepsilon}-u)W^{\varepsilon}\right\|
+\left\|P^N_x\star u(W-W^{\varepsilon})\right\|\\
&+\|P^D\star (u_x^{\varepsilon}-u_x)W^{\varepsilon}\|
+\|P^D\star u_x(W-W^{\varepsilon})\|\\
&\leq 
C'_1T^{1/2}\|W^{\varepsilon}\|_{\mathcal{C}^{\gamma}}\|u^{\varepsilon}-u\|
+C'_2T^{1/2}\|u\|\|W-W^{\varepsilon}\|_{\mathcal{C}^{\gamma}}\\
&\leq  
C'_1T^{1/2}\left(\|W\|_{\mathcal{C}^{\gamma}}+\|W-W^{\varepsilon}\|_{\mathcal{C}^{\gamma}}\right)
\|u^{\varepsilon}-u\|
+C'_2T^{1/2}\|u\|\|W-W^{\varepsilon}\|_{\mathcal{C}^{\gamma}}.
\end{split}
\end{eqnarray*}
Note that the constants $C'_1,C'_2>0$ are independent of $\varepsilon$.
Choose $\delta>0$ such that 
$$C'_1\delta^{1/2}\left(\|W\|_{\mathcal{C}^{\gamma}}+
\|W-W^{\varepsilon}\|_{\mathcal{C}^{\gamma}}\right)<1$$
for small $\varepsilon>0$.
Then, since $\|W-W^{\varepsilon}\|_{\mathcal{C}^{\gamma}} \rightarrow 0,$
$$\|u^{\varepsilon}-u\| \rightarrow 0\
\mbox{as} \ \varepsilon \rightarrow 0.$$

We now consider a time partition $0=t_0<\cdots<t_n=T$ such that
$t_{i+1}-t_i\leq \delta$ for $i=0,\cdots, n-1$. 
Finally, we iterate the above argument for $u^{\varepsilon}-u$ recursively 
on $(t_i,t_{i+1}]$ to get
$$\|u^{\varepsilon}(t_i,\cdot)-u(t_i,\cdot)\|\rightarrow 0 \ \mbox{as}
\ \varepsilon \rightarrow 0$$
for $i=1,\cdots,n-1$.
\end{proof}

\begin{theorem}
\label{optimalregularity}
Let $0<\gamma<1/2$. If $u_0\in \mathcal{C}^{1+\gamma}(0,\pi)$,
then the mild solution of \eqref{eq:main} is indeed in 
$$\mathcal{C}^{(1+\gamma)/2,1+\gamma}_{t,x}
\left((0,T)\times (0,\pi)\right).$$
\end{theorem}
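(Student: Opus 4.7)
Building on Theorem~\ref{fixedpt} and Remark~\ref{SobolevEmb}, the mild solution $u$ already satisfies $u\in L_\infty(0,T;\mathcal{C}^{1+\gamma'}(0,\pi))$ for every $\gamma'<\gamma$; in particular $u(t,\cdot)$ is Lipschitz in space uniformly in $t$. My plan is to work term by term in the mild formulation
$$u(t,x) = \mathbf{P}_0(t,x) + P^N_x \star (uW)(t,x) - P^D \star (u_x W)(t,x)$$
and extract the sharp exponents by direct heat-kernel computations. The initial-data term is easiest: since $u_0\in\mathcal{C}^{1+\gamma}(0,\pi)$, standard Dirichlet heat-semigroup estimates on H\"older spaces give both $\sup_{t\in(0,T)}\|\mathbf{P}_0(t,\cdot)\|_{\mathcal{C}^{1+\gamma}}\leq C\|u_0\|_{\mathcal{C}^{1+\gamma}}$ and $\|\mathbf{P}_0(t,\cdot)-\mathbf{P}_0(s,\cdot)\|_{L_\infty}\leq C|t-s|^{(1+\gamma)/2}\|u_0\|_{\mathcal{C}^{1+\gamma}}$.

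For the spatial $\mathcal{C}^{1+\gamma}$ regularity of the convolution terms, both $uW$ and $u_xW$ lie in $L_\infty(0,T;\mathcal{C}^\gamma(0,\pi))$, since each is a product of a Lipschitz (respectively $\mathcal{C}^{\gamma'}$) function with the $\mathcal{C}^\gamma$ function $W$, and a product of a $\mathcal{C}^1$ and a $\mathcal{C}^\gamma$ function is $\mathcal{C}^\gamma$. The classical parabolic Schauder estimate in H\"older spaces then upgrades them to $\mathcal{C}^{2+\gamma}$ in space, and taking one spatial derivative (as in $P^N_x$) still leaves $\mathcal{C}^{1+\gamma}$. For the temporal $\mathcal{C}^{(1+\gamma)/2}$ regularity I would estimate $|u(t,x)-u(s,x)|$ directly, splitting each time integral as $\int_0^s+\int_s^t$. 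For the Neumann contribution, the cancellation $\int_0^\pi P^N_x(\tau,x,y)\,dy=0$ lets me substitute $(uW)(r,y)\mapsto(uW)(r,y)-(uW)(r,x)$; combining the kernel bounds $|P^N_x(\tau,x,y)|\leq C\tau^{-1}e^{-c|x-y|^2/\tau}$ and $|\partial_\tau P^N_x(\tau,x,y)|\leq C\tau^{-2}e^{-c|x-y|^2/\tau}$ with the spatial $\mathcal{C}^\gamma$ bound on $uW$, and splitting into the regimes $(s-r)\leq(t-s)$ (direct triangle inequality on the kernel) and $(s-r)>(t-s)$ (mean-value bound $|P^N_x(t-r)-P^N_x(s-r)|\leq(t-s)|\partial_\tau P^N_x|$), both cases yield $C|t-s|^{(1+\gamma)/2}$.

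The most delicate step will be the $P^D\star(u_xW)$ contribution to the temporal regularity, because the Dirichlet kernel enjoys no analogous zero-mean cancellation against constants. I would decompose
$$\int_0^\pi P^D(\tau,x,y)(u_xW)(r,y)\,dy=(u_xW)(r,x)\,g(\tau,x)+\int_0^\pi P^D(\tau,x,y)\bigl[(u_xW)(r,y)-(u_xW)(r,x)\bigr]dy,$$
where $g(\tau,x):=\int_0^\pi P^D(\tau,x,y)\,dy$. The cancellation integral is handled just as in the Neumann case. For the $g$-term I would invoke the spectral representation $g(\tau,x)=\frac{4}{\pi}\sum_{k\text{ odd}}k^{-1}e^{-k^2\tau}\sin(kx)$ together with the elementary bound $|e^{-k^2(t-r)}-e^{-k^2(s-r)}|\leq(k^2|t-s|)^{(1+\gamma)/2}e^{-k^2(s-r)}$; summing over $k$ yields $|g(t-r,x)-g(s-r,x)|\leq C|t-s|^{(1+\gamma)/2}(s-r)^{-(1+\gamma)/2}$, and integrating over $r\in(0,s)$ against the bounded function $(u_xW)(r,x)$ gives the final estimate $C|t-s|^{(1+\gamma)/2}$, as desired.
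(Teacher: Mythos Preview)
Your direct kernel approach works (modulo two small repairs) and is genuinely different from the paper's. The repairs: (i)~to put $u_xW\in L_\infty(0,T;\mathcal{C}^\gamma)$ you need $u_x\in\mathcal{C}^{\gamma'}$ with $\gamma'\ge\gamma$, not $\gamma'<\gamma$ as you wrote---but Remark~\ref{SobolevEmb} in fact delivers every $\gamma'<1/2$, so this is free; (ii)~the ``classical parabolic Schauder estimate'' you invoke for the spatial regularity of each convolution term assumes a $\mathcal{C}^{\gamma/2,\gamma}_{t,x}$ right-hand side rather than merely $L_\infty(0,T;\mathcal{C}^\gamma_x)$, yet since Remark~\ref{SobolevEmb} already yields $u\in L_\infty(0,T;\mathcal{C}^{1+\gamma})$, only the \emph{time} regularity is genuinely at issue, and your splitting-plus-cancellation computations for $P^N_x\star(uW)$ and the spectral bound on $g(\tau,x)=\int_0^\pi P^D(\tau,x,y)\,dy$ are sound and produce the exponent $(1+\gamma)/2$.

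The paper proceeds entirely differently. It does no kernel computations at all; instead it takes smooth approximations $W^\varepsilon$, performs the change of variables $u^\varepsilon=v^\varepsilon\exp\bigl(\int_0^x W^\varepsilon(y)\,dy\bigr)$ so that $v^\varepsilon$ obeys a parabolic equation with \emph{bounded} coefficients, cites \cite[Theorem~2.3]{KL18} to obtain convergence of $u^\varepsilon$ in $\mathcal{C}^{(1+\gamma)/2,1+\gamma}_{t,x}$, and then identifies that limit with the mild solution via Theorem~\ref{HolderConvergence} together with the uniqueness result \cite[Theorem~3.5]{KL18}. Thus your argument is self-contained within this paper's framework but computation-heavy, while the paper's proof is a few lines long and offloads the substantive regularity analysis to the exponential transformation and the external reference~\cite{KL18}.
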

\begin{proof}
Let $0<\beta<\gamma<1/2$.
Recall, for each $\varepsilon>0$, the approximated parabolic Anderson model
\begin{eqnarray*}
\begin{split}
\frac{\partial u^{\varepsilon}(t,x)}{\partial t} &=
\frac{\partial^2 u^{\varepsilon}(t,x)}{\partial x^2} +
u^{\varepsilon}(t,x)\cdot \frac{\partial}{\partial x}W^{\varepsilon}(x),\ t>0, \ 0<x<\pi,\\
u^{\varepsilon}(t,0)&=u^{\varepsilon}(t,\pi)=0, \ 
u^{\varepsilon}(0,x)=u_0(x).
\end{split}
\end{eqnarray*}
From the classical parabolic theory,
there exists the unique classical solution $u^{\varepsilon}$.
We note that by Theorem \ref{HolderConvergence},
$u^{\varepsilon}$
converges to the limit $u$ in 
$$L_{\infty}(0,T;H_p^{1+\beta}(0,\pi))$$
since $\mathcal{C}^{1+\gamma}(0,\pi)
\subset H_p^{\beta}(0,\pi)$ for any $p \geq 1.$

On the other hand, let
$v^{\varepsilon}$ satisfy
the equation
\begin{eqnarray*}
\begin{split}
\frac{\partial v^{\varepsilon}(t,x)}{\partial t} &=
\frac{\partial^2 v^{\varepsilon}(t,x)}{\partial x^2} -2
W^{\varepsilon}\cdot \frac{\partial}{\partial x}v^{\varepsilon}(t,x)
+(W^{\varepsilon})^2v^{\varepsilon}(t,x),\ t>0, \ 0<x<\pi,\\
v^{\varepsilon}(t,0)&=v^{\varepsilon}(t,\pi)=0, \ 
v^{\varepsilon}(0,x)=u_0(x)e^{-\int_0^x W^{\varepsilon}(y)dy}.
\end{split}
\end{eqnarray*}
We observe that
$$u^{\varepsilon}(t,x)=v^{\varepsilon}(t,x)e^{\int_0^x W^{\varepsilon}(y)dy}.$$
By \cite[Theorem 2.3]{KL18},
$u^{\varepsilon}$ converges to a limit in 
$$\mathcal{C}^{(1+\gamma)/2,1+\gamma}_{t,x}\left((0,T) \times (0,\pi)\right)$$
and
the limit of $u^{\varepsilon}$ is unique 
in $L_2\left((0,T);H_0^1(0,\pi)\right) \bigcap L_{\infty}\left((0,T);L_2(0,\pi)\right)$
by \cite[Theorem 3.5]{KL18},
where $H_0^1(0,\pi)$ is the closure of the set of smooth functions with compact support in $(0,\pi)$
with respect to the norm $\|\cdot\|_{H^1_2(0,\pi)}$.

Since, for $p \geq 2$, $$L_{\infty}(0,T;H_p^{1+\beta}(0,\pi)) 
\subset L_2\left((0,T);H_0^1(0,\pi)\right) \bigcap L_{\infty}\left((0,T);L_2(0,\pi)\right),$$
the mild solution $u$ of \eqref{eq:main} is indeed
in $$\mathcal{C}^{(1+\gamma)/2,1+\gamma}_{t,x}\left((0,T)\times (0,\pi)\right).$$
\end{proof}

\begin{remark}
The reason why we set the upper bound of regularity $\gamma$ less than $1/2$ 
in Theorem \ref{fixedpt}, \ref{HolderConvergence} and \ref{optimalregularity}
is due to the regularity of Brownian motion.
In fact, the Brownian motion $W$ can be 
replaced by any pathwisely H\"older $\gamma$ continuous process 
with $0<\gamma<1$ for
Theorem \ref{fixedpt}, \ref{HolderConvergence} and \ref{optimalregularity}. 
For example, $W$ can be a standard fractional Brownian motion $W^H$ with 
the Hurst index $0<H<1$.
\end{remark}

\begin{remark}
Consider the following equation on the whole line $\mathbb{R}$
\begin{eqnarray*}
\begin{split}
\frac{\partial u(t,x)}{\partial t} &=
\frac{\partial^2 u(t,x)}{\partial x^2} +
u(t,x)\cdot \frac{\partial}{\partial x}W(x),\ t>0, \ x\in \mathbb{R},\\
u(0,x)&=u_0(x),\ x\in \mathbb{R}.
\end{split}
\end{eqnarray*}
Theorem \ref{fixedpt}, \ref{HolderConvergence} and \ref{optimalregularity}
will also work on the whole line $\mathbb{R}$
if the H\"older $\gamma$ norm of $W$ on $\mathbb{R}$ is bounded
with $0<\gamma<1$;
\begin{itemize}
\item
The modified Schauder estimate result in Theorem \ref{SchauderEstimate} 
is sharper in H\"older spaces 
if $P^D$ is replaced by the Gaussian heat kernel $P(t,x-y)$ on $\mathbb{R}$:
for any $f\in \left(0,T;\mathcal{C}^{\gamma}(\mathbb{R})\right)$, we have
$$\left\|P \star f\right\|_{L_{\infty}
\left(0,T;\mathcal{C}^{2+\gamma}(\mathbb{R})\right)} \leq 
CT^{\gamma/2} \|f\|_{L_{\infty}\left(0,T;\mathcal{C}^{\gamma}(\mathbb{R})\right)},$$
for some $C>0$, which is independent of $T$.
Note that a Brownian motion on $\mathbb{R}$ do not have
a sample trajectory that has a bounded H\"older norm.
\item
Clearly, $P_y(t,x-y)=P_x(t,x-y)$ holds for $t>0$.
\end{itemize}
\end{remark}

\section{Conclusion and Further Directions}
\label{sec:FD}

\subsection{Spatial Optimal Regularity}

The paper \cite{Hu15} gives the spatial
(H\"older) regularity only less than $1/2$. However,
Theorem \ref{fixedpt} using the fixed point argument and
\cite[Theorem 2.3]{KL18}
show that
the optimal spatial (H\"older) regularity of the solution $u$ is $3/2-\varepsilon$
for any $\varepsilon>0$ as long as $u_0 \in \mathcal{C}^{3/2}(0,\pi)$.
The result implies several important remarks:
\begin{itemize}
\item Achieve the spatial regularity higher than $1/2$.
Since the standard Brownian motion $W$ is H\"older $1/2-\varepsilon$ 
continuous almost surely, it is possible to apply Young's integral:
For each $s<t$ and $x$,
$$
\int_0^{\pi} P(t-s,x,y)u(s,y)\frac{\partial}{\partial y}W(y)dy:=
\int_0^{\pi} P(t-s,x,y)u(s,y)dW(y)
$$
appearing in the classical mild formulation of \eqref{eq:main};
\item The regularity $3/4-\varepsilon$ in time and $3/2-\varepsilon$ in space for
$\varepsilon>0$
are indeed in line with the standard parabolic partial differential equation theory.
\end{itemize}

\subsection{Why Mild Formulation?}

In fact, the mild formulation is also applied to construct the Wick-It\^o-Skorokhod solution 
of \eqref{eq00-sp1-wick} on $[0,\pi]$ with Dirichlet boundary condition 
in \cite{KL17}. It is known that for
any initial function $u_0 \in \mathcal{C}^{3/2}(0,\pi)$, there exists the unique
Wick-It\^o-Skorokhod solution $u^{\diamond}$
satisfying the equation
\bel{wickmild}
u^{\diamond}(t,x)=\mathbf{P}_0(t,x)
+\int_0^t\int_0^{\pi} P(t-s,x,y)u^{\diamond}(s,y)\diamond \dot W(y)dyds
\ee
almost surely in $\mathcal{C}_{t,x}^{3/4-\varepsilon, 3/2-\varepsilon}\left((0,T)\times (0,\pi)\right)$
for any $\varepsilon>0$.

The natural further question is to find a
meaningful relation 
between the usual solution of \eqref{eq:main} and 
the Wick-It\^o-Skorokhod solution of \eqref{eq00-sp1-wick}
with Dirichlet boundary condition
in the mild formulation.

There are relations between the usual product and the Wick product
(e.g. \cite{Luo} and \cite{MikRoz12}).
Let $\xi_k$' be i.i.d standard Gaussian random variables. 
Denote by $\mathcal{S}$ the collection of all multi-indices 
$\alpha=(\alpha_1,\alpha_2,\cdots)$ 
such that
$\alpha_k \in \mathbb{N}_0,\ k=1,2,\cdots,$ and
$\displaystyle\sum_{k=1}^{\infty} \alpha_k <\infty.$ For 
$\alpha,\beta \in \mathcal{S}$,
define
\begin{itemize}
\item $(0)=(0,0,\cdots)$;
\item $\epsilon(k)$ is the multi-index $\alpha$ such that 
$\alpha_k=1$ and $\alpha_i=0$ if $i\neq k$;
\item $\alpha+\beta=(\alpha_1+\beta_1,\alpha_2+\beta_2,\cdots)$;
\item $\alpha-\beta=(\max(\alpha_1-\beta_1,0),\max(\alpha_2-\beta_2,0),\cdots)$;
\item $\alpha!=\prod_k \alpha_k!$.
\end{itemize}

We define the Hermite polynomial of order $n$ by
$$\mathbf{H}_n(x)=(-1)^n e^{x^2/2} \displaystyle\frac{d^n}{dx^n}e^{-x^2/2}$$
and define, for each $\alpha \in \mathcal{S}$,
$$\xi_{\alpha}=\prod_{k=1}^{\infty} \left(\frac{\mathbf{H}_{\alpha_k}(\xi_k)}{
\sqrt{\alpha_k!}}\right).$$
For any $u,v \in L_2(\Omega)$,
Cameron-Martin theorem \cite{CM47} gives the following fact:
For each $t\in [0,T]$ and $x\in [0,\pi]$,
$u$ and $v$ 
may be written as
$$u(t,x)=\sum_{\alpha\in \mathcal{S}}u_{\alpha}(t,x)\xi_{\alpha}\ 
\mbox{and} \ v(t,x)=\sum_{\alpha\in \mathcal{S}}v_{\alpha}(t,x)
\xi_{\alpha}.
$$ 
Also, 
we have the identity \cite[Theorem 2.3]{Luo}
$$u\cdot v=u\diamond v +
\sum_{\alpha\in \mathcal{S}}\left(\sum_{\gamma \neq (0)} \sum_{(0)\leq \beta \leq \alpha}
\frac{\sqrt{\alpha!(\alpha-\beta+\gamma)!(\beta+\gamma)!}}{\beta!\gamma!(\alpha-\beta)!}
u_{\alpha-\beta+\gamma}v_{\beta+\gamma}\right)\xi_{\alpha}.
$$

From the fact that
standard Brownian motion on $[0,\pi]$ has an explicit formula 
$$W(x)=\sum_{k=1}^{\infty} \left(\int_0^x m_k(y)dy\right) \xi_{\epsilon(k)},$$
we have a formal expression of Gaussian white noise on $[0,\pi]$ given by
$$\dot W(x)=\sum_{k=1}^{\infty} m_k(x) \xi_{\epsilon(k)},$$ where $m_k$'s
are defined as before.
Consider smooth approximations $\dot W^{\varepsilon}$ of $\dot W(x)$ by a convolution with
$\phi_{\varepsilon}$ as above: For each $x\in [0,\pi]$ and
$1\leq p <\infty$,
$$\dot W^{\varepsilon}(x)=\sum_{k=1}^{\infty} m_k^{\varepsilon}(x)\xi_{\epsilon(k)}
\in L_p(\Omega),$$
where $m_k^{\varepsilon}(x)=m_k \ast \phi_{\varepsilon}(x).$

It is also known \cite{KL17} that for any $u_0\in L_p(0,\pi),\ 1\leq p <\infty$, 
the Wick-It\^o-Skorokhod solution of \eqref{wickmild} has 
the basic regularity
$$u^{\diamond}(t,x)\in L_p(\Omega),\ t>0,\ x\in[0,\pi].$$
Also, the approximated Wick-It\^o-Skorokhod solutions in the mild formulation
$$
\left(u^{\varepsilon}\right)^{\diamond}(t,x)=\mathbf{P}_0(t,x)
+\int_0^t\int_0^{\pi} P(t-s,x,y)
\left(u^{\varepsilon}\right)^{\diamond}(s,y)\diamond \dot W^{\varepsilon}(y)dyds.
$$
have the regularity
$$\left(u^{\varepsilon}\right)^{\diamond}(t,x)\in L_p(\Omega),\ t>0,\ x\in[0,\pi].$$

Then, we get the relation
\bel{relation}
\left(u^{\varepsilon}\right)^{\diamond}(t,x)\cdot \dot W^{\varepsilon}(x) = \left(u^{\varepsilon}\right)^{\diamond}(t,x)\diamond \dot W^{\varepsilon}(x) +
\sum_{\alpha\in \mathcal{S}}\sum_{k\geq 1}\left(\sqrt{\alpha_k+1}
\left(u^{\varepsilon}\right)^{\diamond}_{\alpha+\epsilon(k)}(t,x)
m^{\varepsilon}_{k}(x)\right)\xi_{\alpha}.
\ee

Define the residual by
$$\eta^{\varepsilon}(t,x)=
\displaystyle\sum_{\alpha\in \mathcal{S}} \eta^{\varepsilon}_{\alpha}(t,x)\xi_{\alpha}
:=\sum_{\alpha\in \mathcal{S}}\sum_{k\geq 1}\left(\sqrt{\alpha_k+1}
\left(u^{\varepsilon}\right)^{\diamond}_{\alpha+\epsilon(k)}(t,x)m^{\varepsilon}_{k}(x)\right)\xi_{\alpha},$$
where
$$\eta^{\varepsilon}_{\alpha}(t,x):=\sum_{k\geq 1}\sqrt{\alpha_k+1}
\left(u^{\varepsilon}\right)^{\diamond}_{\alpha+\epsilon(k)}(t,x)m^{\varepsilon}_{k}(x).$$

Beyond the basic relations, let us find a further connection 
between usual solution and Wick-It\^o-Skorokhod solution 
of \eqref{eq00}.
Consider the approximated mild solutions of \eqref{eq:mild-approx}
\bel{mildusual2}
u^{\varepsilon}(t,x)=\mathbf{P}_0(t,x)
+\int_0^t\int_0^{\pi} P(t-s,x,y)u^{\varepsilon}(s,y)\cdot \dot W^{\varepsilon}(y)dyds.
\ee

After we define $Z^{\varepsilon}(t,x)$
by $$Z^{\varepsilon}(t,x)=u^{\varepsilon}(t,x)-\left(u^{\varepsilon}\right)^{\diamond}(t,x)$$
and using the relation \eqref{relation}, we have the equation
\bel{difference}
Z^{\varepsilon}(t,x)=\int_0^t\int_0^{\pi} P(t-s,x,y)Z^{\varepsilon}(s,y)\cdot \dot W^{\varepsilon}(y)dyds
-\int_0^t\int_0^{\pi} P(t-s,x,y)\eta^{\varepsilon}(s,y)dyds.
\ee
Equivalently, the equation \eqref{difference} is the mild formulation of
\bel{eq:difference}
\begin{split}
&\frac{\partial Z^{\varepsilon}(t,x)}{\partial t}
=\frac{\partial^2 Z^{\varepsilon}(t,x)}{\partial x^2}+
Z^{\varepsilon}(t,x)\cdot \dot W^{\varepsilon}(x)-\eta^{\varepsilon}(t,x),
\ 0<t<T,\ 0<x<\pi; \\
&Z^{\varepsilon}(0,x)=0, \ Z^{\varepsilon}(t,0)=Z^{\varepsilon}(t,\pi)=0.
\end{split}
\ee
Set 
$$Z(t,x)=u(t,x)-u^{\diamond}(t,x),$$ where $u$ is the usual mild solution of \eqref{eq:main}
and $u^{\diamond}$ is the Wick-It\^o-Skorokhod solution of \eqref{eq00-sp1-wick}
on $[0,\pi]$
with Dirichlet boundary condition.
Then, for any $0<\gamma_1<3/4$ and $0<\gamma_2<3/2$,
$$Z^{\varepsilon} \rightarrow Z\ \mbox{in} 
\ \mathcal{C}^{\gamma_1,\gamma_2}_{t,x}\left((0,T)\times(0,\pi)\right)
\ \mbox{as} \ \varepsilon \rightarrow 0.$$
We naturally expect that
$Z(t,x)$ satisfies
the equation
\bel{eq:difference-1}
\begin{split}
&\frac{\partial Z(t,x)}{\partial t}
=\frac{\partial^2 Z(t,x)}{\partial x^2}+Z(t,x)\cdot \dot W(x)-\eta(t,x),
\ 0<t<T,\ 0<x<\pi; \\
&Z(0,x)=0, \ Z(t,0)=Z(t,\pi)=0,
\end{split}
\ee
where $$\eta(t,x)=
\sum_{\alpha\in \mathcal{S}}\sum_{k\geq 1}\left(\sqrt{\alpha_k+1}
u^{\diamond}_{\alpha+\epsilon(k)}(t,x)m_{k}(x)\right)\xi_{\alpha}.$$
Actually, we can show,
for each $t>0$, $\eta^{\varepsilon}(t,\cdot)$ converges to $\eta(t,\cdot)$
in $L_2\left(\Omega;\mathcal{H}_2^{-r}(0,\pi)\right)$ with $r>1/2$.
Here, $\mathcal{H}_2^{-r}$ is the dual space of the Sobolev space $\mathcal{H}_2^r$.
Therefore,
we can view the Wick-It\^o-Skorokhod solution of \eqref{eq00-sp1-wick}
on $[0,\pi]$
with Dirichlet boundary condition
as an approximation of usual mild solution of \eqref{eq:main}, and moreover,
further investigation of the residual equation \eqref{eq:difference}
is reasonable to give a rigorous relation between
usual solution and Wick-It\^o-Skorokhod solution.

\def\cprime{$'$}
\providecommand{\bysame}{\leavevmode\hbox to3em{\hrulefill}\thinspace}
\providecommand{\MR}{\relax\ifhmode\unskip\space\fi MR }
\providecommand{\MRhref}[2]{%
  \href{http://www.ams.org/mathscinet-getitem?mr=#1}{#2}
}
\providecommand{\href}[2]{#2}

\end{document}